\numberwithin{equation}{section}
\renewcommand{\subsection}{\@startsection
{subsection}{2}{0mm}{\baselineskip}{-0.25cm}
{\normalfont\normalsize\em}}
\newtheorem{theorem}{Theorem}[section]
\newtheorem{proposition}[theorem]{Proposition}
\newtheorem{corollary}[theorem]{Corollary}
\newtheorem{lemma}[theorem]{Lemma}
\theoremstyle{definition}
\newtheorem{example}[theorem]{Example}}
\theoremstyle{remark}
\newtheorem*{claim*}{Claim}
\newtheorem{remark}[theorem]{Remark}
\def\bN{\mathbb N}
\def\bP{\mathbb P}
\def\F{\mathbf F}
\def\K{\mathbf K}
\def\M{\mathbf M}
\def\cD{\mathcal D}
\def\cE{\mathcal E}
\def\cH{\mathcal H}
\def\cJ{\mathcal J}
\def\cR{\mathcal R}
\def\cX{\mathcal X}
\def\deg{{\rm deg}}
\def\Fl{{\mathbb F_\ell}}
\def\F49{{\mathbb F_{49}}}
\def\F64{{\mathbb F_{64}}}
\def\Fq2{{\mathbb F_{q^2}}}
\begin{document}

\author[N.\, Arakelian]{Nazar Arakelian}
\author[S.\, Tafazolian]{Saeed Tafazolian}
\author[F.\, Torres]{Fernando Torres} 

\thanks{{\em Key words:} finite field, Hasse-Weil bound, 
St\"ohr-Voloch theory, maximal curve}

\thanks{\today}

\title[The spectrum for the genera of maximal curves]{On 
the spectrum for the genera of maximal curves over small 
fields}

  \address{CMCC/Universidade Federal do ABC, Avenida dos Estados 5001, 
  09210-580, Santo Andr\'e, SP-Brasil} 
  \email{n.arakelian@ufabc.edu.br}

  \address{School of Mathematics,
  Institute for Research in Fundamental Science (IPM),
  P.O. Box 19395-5746,
  Tehran, Iran\\
  Dept. of Mathematics and Computer Scinence,
  Amirkabir University of Technology,
  424 Hafez Ave, 
  Tel: +98 (21) 64540
  P.O. Box: 15875-4413, Tehran, Iran}
  \email{saeed@gmail.com}

  \address{IMECC/UNICAMP, R. S\'ergio Buarque de Holanda 651, 
Cidade Universit\'aria \lq\lq Zeferino Vaz", 13083-859, 
Campinas, SP, Brazil}

  \email{ftorres@ime.unicamp.br}

    \begin{abstract} Motivated by previous computations in 
   Garcia, Stichtenoth and Xing (2000) paper \cite{GSX}, we 
   discuss the spectrum $\M(q^2)$ for the genera of maximal curves 
   over finite fields of order $q^2$ with $7\leq q\leq 16$. 
   In particular, by using a result in Kudo and Harashita 
   (2016) paper \cite{KH}, the set $\M(7^2)$ is completely
   determined.
    \end{abstract}

  \maketitle
  
    \section{Introduction}\label{s1}
    
Let $\cX$ be a (projective, nonsingular, geometrically 
irreducible, algebraic) curve of genus $g$ defined over a 
finite field $\K=\Fl$ of order $\ell$. The following 
inequality is the so-called {\em Hasse-Weil bound} on 
the size $N$ of the set $\cX(\K)$ of $\K$-rational points of $\cX$:
   \begin{equation}\label{eq1.1}
   |N-(\ell+1)|\leq 2g\cdot \sqrt{\ell}\, .
   \end{equation}
 In Coding Theory, Cryptography, or Finite Geometry one is often 
 interested in curves with 
   \lq\lq many points", namely those with $N$ as bigger as 
   possible. In this paper, we work out over fields of square 
   order, $\ell=q^2$, and deal with so-called 
   {\em maximal curves 
   over $\K$}; that is to say, those curves attained the upper bound in 
   (\ref{eq1.1}), namely
   \begin{equation}\label{eq1.2}
   N=q^2+1+2g\cdot q\, .
   \end{equation}
     The subject matter of this note is in fact concerning 
     the {\em spectrum for the genera} of maximal curves over $\K$,
  \begin{equation}\label{eq1.3}
\M(q^2):=\{g\in\bN_0:\text{there is a maximal curve over $\K$ of 
genus $g$}\}\, .
  \end{equation}  
In Section \ref{s2} we subsume basic facts on a maximal curve 
$\cX$ being the key property the existence of a very ample 
linear series $\cD$ on $\cX$ equipped with a nice property, namely 
(\ref{eq2.2}). In particular, Castelnuovo's genus bound 
(\ref{eq2.3}) and Halphen's theorem imply a 
nontrivial restriction on the genus $g$ of $\cX$, stated in 
(\ref{eq3.1}) (see \cite{KT2}) and thus $g\leq q(q-1)/2$ (Ihara's 
bound \cite{I}). 

Let $r$ be the dimension of $\cD$. Then $r\geq 2$ by 
(\ref{eq2.2}), and the condition $r=2$ is equivalent to 
$g=q(q-1)/2$, or equivalent to $\cX$ being $\K$-isomorphic to 
the Hermitian curve $y^{q+1}=x^q+x$ \cite{X-Sti}, \cite{FT}.  
Under certain conditions, we have a similar result for $r=3$ 
in Corollary \ref{cor2.1} and Proposition \ref{prop3.1}. In 
fact, in Section \ref{s3} we bound $g$ via St\"ohr-Voloch 
theory \cite{SV} applied to $\cD$ being the main results the 
aforementioned proposition and its Corollary \ref{cor3.1}. 
Finally, in Section \ref{s4} we apply all these results toward 
the computation of $\M(q^2)$ for $q=7,8,9,11,13,16$. In fact, 
here we improve \cite[Sect. 6]{GSX} and, in particular, we 
can compute $\M(7^2)$ (see Corollary \ref{cor4.1}) by using 
Corollary \ref{cor3.1} and a result of Kudo and Harashita 
\cite{KH} which asserts that there is no maximal curve of 
genus 4 over ${\mathbb F}_{49}$.

{\bf Conventions.} $\bP^s$ is the $s$-dimensional projective 
space defined over the algebraic closure of the base field.

      \section{Basic Facts on Maximal curves}\label{s2}
   
   Throughout, let $\cX$ be a maximal curve over the field 
   $\K=\Fq2$ of order $q^2$ of 
   genus $g$. Let $\Phi:\cX\to\cX$ be the Frobenius 
   morphism relative to $\K$ (in particular, the set of fixed 
   points of $\Phi$ coincides with $\cX(\K)$). For a fixed 
   point $P_0\in\cX(\K)$, let $j:\cX\to \cJ, P\mapsto 
   [P-P_0]$ be the embedding of $\cX$ into its Jacobian 
   variety $\cJ$.  Then, in a natural way, $\Phi$ induces a 
   morphism $\tilde \Phi:\cJ\to\cJ$ such that  
     \begin{equation}\label{eq2.1}
   j\circ \tilde \Phi = \tilde \Phi\circ j\, .
     \end{equation}
   Now from (\ref{eq1.2}) the enumerator of the Zeta Function of 
   $\cX$ is given by the polynomial $L(t)=(1+qt)^{2g}$. It turns 
   out that $h(t):=t^{2g}L(t^{-1})$ is the characteristic 
   polynomial of $\tilde \Phi$; i.e., $h(\tilde \Phi)=0$ on 
   $\cJ$. As a matter of fact, since $\tilde\Phi$ is 
   semisimple and the representation of endomorphisms of $\cJ$ on 
   the Tate module is faithful, 
   from (\ref{eq2.1}) it follows that
     \begin{equation}\label{eq2.2}
     (q+1)P_0\sim qP+\Phi(P)\, ,\quad P\in\cX\, .
     \end{equation}
This suggests to study the {\em Frobenius linear series} on 
$\cX$, namely the complete linear series 
$\cD:=|(q+1)P_0|$ which is in fact a $\K$-invariant of $\cX$ by 
(\ref{eq2.2}); see \cite{FGT}, \cite[Ch. 10]{HKT} 
for further information. 

Moreover, $\cD$ is a very ample linear series in the 
following sense. Let $r$ be the dimension of $\cD$, which we 
refer as the {\em Frobenius dimension} of $\cX$, and 
$\pi:\cX\to \bP^r$ be a morphism related to $\cD$; we notice 
that $r\geq 2$ by (\ref{eq2.2}). Then $\pi$ is an embedding 
\cite[Thm. 2.5]{KT1}. In particular, Castelnuovo's genus 
bound applied to $\pi(\cX)$ gives the following constrain 
involving the genus $g$ and Castelnuovo numbers $c_0(r,q+1)$:
    \begin{equation}\label{eq2.3}
g\leq c_0(r)=c_0(r,q+1):=\begin{cases} 
((2q-(r-1))^2-1)/8(r-1) & \text{if $r$ is even}\, ,\\
(2q-(r-1))^2/8(r-1) & \text{if $r$ is odd}\, .
  \end{cases}
  \end{equation}
  \begin{remark}\label{rem2.1} A direct computation shows that 
  $c_0(r)\leq c_0(s)$ provided that $r\geq s$. 
  \end{remark}
  Since $c_0(r)\leq c_0(2)=q(q-1)/2$, as $r\geq 2$, then 
  $g\leq q(q-1)/2$ which is a well-known fact on maximal curves 
  over $\K$ due to Ihara \cite{I}. In addition, $c_0(r)\leq c_0(3)=
  (q-1)^2/4$ for $r\geq 3$, so that the genus $g$ of 
  a maximal curve 
  over $\K$ does satisfy the following condition (see \cite{FT})
    \begin{equation}\label{eq2.4}  
  g\leq c_0(3)=(q-1)^2/4\quad\text{or}\quad
  g=c_0(2)=q(q-1)/2\, .
  \end{equation}
As a matter of fact, the following 
  sentences are equivalent.
  \begin{lemma}\label{lemma2.1}{\rm (\cite{R-Sti}, \cite{FT})}
  \begin{enumerate}
  \item[\rm(1)]\quad $g=c_0(2)=q(q-1)/2;$
  \item[\rm(2)]\quad $(q-1)^2/4< g\leq q(q-1)/2;$
  \item[\rm(3)]\quad $r=2;$
  \item[\rm(4)]\quad $\cX$ is $\K$-isomorphic to the Hermitian 
  curve $\cH:\, y^{q+1}=x^q+x.$
  \end{enumerate}
  \end{lemma}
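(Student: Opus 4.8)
The plan is to first dispose of the three numerical/geometric conditions (1), (2), (3) as formal consequences of the material already assembled, and then to isolate the genuine content in the identification with the Hermitian curve. I would run the cycle $(1)\Rightarrow(2)\Rightarrow(3)\Rightarrow(1)$. The implication $(1)\Rightarrow(2)$ is immediate, since $q(q-1)/2-(q-1)^2/4=(q^2-1)/4>0$, so $g=c_0(2)$ automatically satisfies $(q-1)^2/4<g\leq q(q-1)/2$. For $(2)\Rightarrow(3)$, recall that $r\geq 2$ always by (\ref{eq2.2}); if one had $r\geq 3$, then Remark \ref{rem2.1} together with Castelnuovo's bound (\ref{eq2.3}) would give $g\leq c_0(r)\leq c_0(3)=(q-1)^2/4$, contradicting the hypothesis $g>(q-1)^2/4$. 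Hence $r=2$. For $(3)\Rightarrow(1)$, when $r=2$ the morphism $\pi$ realizes $\cX$ as a \emph{smooth} plane curve in $\bP^2$ of degree $\deg\cD=q+1$ (the map is an isomorphism onto its image, hence birational of degree one, and a line section has degree $q+1$); the degree–genus formula then yields $g=q(q-1)/2=c_0(2)$ exactly. This closes the equivalence of (1), (2), (3).

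It remains to bring in condition (4). The direction $(4)\Rightarrow(1)$ is routine: the Hermitian curve $\cH:\ y^{q+1}=x^q+x$ is a smooth plane curve of degree $q+1$ and therefore has genus $q(q-1)/2$. The substantial implication is $(3)\Rightarrow(4)$ (equivalently $(1)\Rightarrow(4)$), and here I would exploit the Frobenius relation (\ref{eq2.2}) read on the plane model. For each $P\in\cX$ the divisor $qP+\Phi(P)$ is linearly equivalent to the hyperplane class $H=(q+1)P_0$, so it is cut out by a line $L_P$; thus $L_P$ meets the degree-$(q+1)$ curve $\pi(\cX)$ at $P$ with multiplicity $q$ and at $\Phi(P)$ with multiplicity one. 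A general point therefore carries a line of contact order $q$, far exceeding the value $2$ expected for a classical plane curve.

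I would feed this into St\"ohr–Voloch theory \cite{SV} applied to the plane model. The contact phenomenon above forces the generic order sequence to be $(0,1,q)$ and the $\Fq2$-Frobenius orders to be $(0,q)$, i.e. $\cX$ is extremely nonclassical with the osculating line at a general point passing through its Frobenius image. Fixing a rational point $P_0$, normalizing coordinates so that $P_0$ and its osculating line sit in standard position, and using that \emph{every} rational point enjoys this same contact behaviour, one recovers the defining equation $y^{q+1}=x^q+x$ up to a $\K$-linear change of variables, which is precisely (4). The main obstacle is exactly this last step: the equivalences among (1)–(3) are essentially bookkeeping with Castelnuovo's bound and the degree–genus formula, whereas extracting the explicit Hermitian equation from the nonclassical contact data requires the fine order analysis, and this is the substance of the cited results \cite{R-Sti}, \cite{FT}.
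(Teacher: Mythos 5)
The paper itself gives no proof of Lemma \ref{lemma2.1}: it is quoted directly from \cite{R-Sti} and \cite{FT}, so your attempt has to be measured against those sources and against the toolkit of Section \ref{s2}. Your cycle $(1)\Rightarrow(2)\Rightarrow(3)\Rightarrow(1)$ is correct and complete within that toolkit: the arithmetic in $(1)\Rightarrow(2)$, the use of Remark \ref{rem2.1} and Castelnuovo's bound (\ref{eq2.3}) in $(2)\Rightarrow(3)$, and the degree--genus formula applied to the smooth plane model furnished by the embedding theorem \cite[Thm. 2.5]{KT1} in $(3)\Rightarrow(1)$ are all sound. Note, though, that this last step leans on the (later, nontrivial) embedding theorem; the historical argument in \cite{FT} avoids it: with $r=2$ and orders $0,1,q$ one has $\deg R=(q+1)(2g-2)+3(q+1)$, while $v_P(R)\geq 1$ at each of the $N=q^2+1+2gq$ rational points (since $j_2(P)=q+1$ there), giving $\deg R\geq N$, hence $g\geq q(q-1)/2$, and Ihara's bound forces equality. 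Either route is legitimate inside this paper's framework.

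The genuine gap is the implication $(3)\Rightarrow(4)$. What you offer there is a description of the expected geometry --- order sequence $(0,1,q)$, Frobenius orders $(0,q)$, osculating line through the Frobenius image --- followed by the assertion that ``one recovers the defining equation $y^{q+1}=x^q+x$'' after normalizing coordinates. No argument is given for that assertion, and it does not follow from the contact data by mere bookkeeping: turning Frobenius nonclassicality of a degree-$(q+1)$ smooth plane curve into uniqueness of the Hermitian model is exactly the theorem of \cite{R-Sti} that the lemma cites, and you would need either to reproduce its proof or to invoke a classification of smooth Frobenius nonclassical plane curves, neither of which you do. The actual proof in \cite{R-Sti} is not an order-sequence rigidity argument: one uses $g=q(q-1)/2$ to see that the Weierstrass semigroup at a rational point $P_0$ is exactly $\langle q,q+1\rangle$ (it contains $q$ and $q+1$ by (\ref{eq2.2}), and $\langle q,q+1\rangle$ already has $q(q-1)/2$ gaps, so no further nongaps can occur), picks functions $x,y$ with pole divisors $qP_0$ and $(q+1)P_0$, and then carries out an explicit function-field computation --- again driven by the fundamental relation (\ref{eq2.2}) --- showing that after a $\K$-linear normalization these generators satisfy $y^{q+1}=x^q+x$. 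Until you supply that computation, your proof is incomplete at its one substantial point; the equivalences (1)--(3), which you do prove, are the easy part of the lemma.
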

    \begin{corollary}\label{cor2.1} Let $\cX$ be a maximal curve 
    over $\K$ of genus $g$ and Frobenius dimension $r.$ Suppose that
  $$
c_0(4)=(q-1)(q-2)/6<g\leq c_0(3)=(q-1)^2/4\, .
  $$
  Then $r=3.$ 
  \end{corollary}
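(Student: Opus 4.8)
The plan is to exploit the monotonicity of the Castelnuovo numbers recorded in Remark~\ref{rem2.1} together with the dichotomy established in Lemma~\ref{lemma2.1}. Since $\cX$ is a maximal curve over $\K$, its Frobenius dimension satisfies $r\geq 2$, and the genus $g$ obeys the constraint (\ref{eq2.3}), i.e. $g\leq c_0(r)$. First I would observe that the hypothesis $g>c_0(4)=(q-1)(q-2)/6$ is the key quantitative input: it forces $r$ to be small, because if $r$ were large then $c_0(r)$ would be too small to accommodate $g$.

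More precisely, I would argue by eliminating the cases $r=2$, $r\geq 4$, leaving only $r=3$. To rule out $r\geq 4$, note that by Remark~\ref{rem2.1} the function $c_0$ is nonincreasing in $r$, so $r\geq 4$ would give $g\leq c_0(r)\leq c_0(4)=(q-1)(q-2)/6$, directly contradicting the assumption $g>c_0(4)$. To rule out $r=2$, I would invoke Lemma~\ref{lemma2.1}: the condition $r=2$ is equivalent to $g=c_0(2)=q(q-1)/2$, whereas our hypothesis bounds $g\leq c_0(3)=(q-1)^2/4<q(q-1)/2$, so $r=2$ is impossible. Since $r\geq 2$ always holds and both $r=2$ and $r\geq 4$ are excluded, we conclude $r=3$.

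The only point requiring genuine care is the application of Lemma~\ref{lemma2.1} to exclude $r=2$: one must confirm that the strict inequality $g\leq c_0(3)<c_0(2)$ is compatible with the equivalences in that lemma. In fact the lemma states that $r=2$ is equivalent to $g$ lying in the half-open range $(c_0(3),c_0(2)]=((q-1)^2/4,\,q(q-1)/2]$, and our hypothesis places $g$ at or below $c_0(3)$, disjoint from this range; hence $r\neq 2$. I would just need to verify the elementary numerical inequality $(q-1)^2/4<q(q-1)/2$, which holds for all $q\geq 2$.

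I do not anticipate a serious obstacle here: the statement is essentially a bookkeeping consequence of the monotonicity in Remark~\ref{rem2.1} and the characterization in Lemma~\ref{lemma2.1}. The mild subtlety worth double-checking is the boundary behavior at $g=c_0(3)$ and $g=c_0(4)$, namely that the hypothesis uses a strict inequality $c_0(4)<g$ (so that $r=4$ is genuinely excluded rather than borderline) and a non-strict $g\leq c_0(3)$ (so that the range for $r=3$ is correctly captured). Once these endpoints are handled, the conclusion $r=3$ follows immediately.
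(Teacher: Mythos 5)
Your proposal is correct and follows essentially the same route as the paper's own proof: eliminate $r\geq 4$ via the Castelnuovo bound (\ref{eq2.3}) together with the monotonicity in Remark~\ref{rem2.1}, then eliminate $r=2$ via the equivalence in Lemma~\ref{lemma2.1} since the hypothesis forces $g\leq (q-1)^2/4 < q(q-1)/2$. Your treatment is just slightly more explicit about the boundary cases and the numerical inequality, but the substance is identical.
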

  \begin{proof} If $r\geq 4$, then $g\leq  (q-1)(q-2)/6$ 
  by (\ref{eq2.3}); so $r=2$ or $r=3$. Thus $r=3$ by Lemma 
  \ref{lemma2.1} and hypothesis on $g$.
  \end{proof}
  Under certain conditions, this result will be improved 
  in Proposition \ref{prop3.1}. 
  
The following important remark is commonly attributed to 
J.P. Serre. 
  \begin{remark}\label{rem2.2} Any curve (nontrivially) 
  $\K$-covered by 
  a maximal curve over $\K$ is also maximal over $\K$. In 
  particular, any 
  subcover over $\K$ of the Hermitian curve is so; see e.g. 
  \cite{GSX}, \cite{CKT}.
  \end{remark}
  \begin{remark}\label{rem2.3} We do point out that there are 
  maximal curves over $\K$ 
  which cannot be (nontrivially) $\K$-covered by the Hermitian 
  curve $\cH$, see 
  \cite{GK}, \cite{T^3}, \cite{gqz}. \\
  We also notice that there are maximal curves over $\K$ that 
  cannot be Galois covered by the Hermitian curve \cite{G-Sti}, 
  \cite{DM}, \cite{T^3}, \cite{gqz}. \\
  We also observe that all the examples occuring in this 
  remark are defined over fields of order $q^2=\ell^6$ with $\ell>2$.
  \end{remark}
  
  \section{The set $\M(q^2)$}\label{s3}
  
In this section we investigate the spectrum $\M(q^2)$ 
for the genera of maximal curves defined in 
(\ref{eq1.3}). By using Remark \ref{rem2.2} this set has already been 
computed for $q\leq 5$ \cite[Sect. 6]{GSX}. 
As a matter of fact, $\M(2^2)=\{0,1\}$, 
$\M(3^2)=\{0,1,3\}$, $\M(4^2)=\{0,1,2, 6\}$, and $\M(5^2)=
\{0,1,2,3,4,10\}$. Thus from now on we assume $q\geq 7$.

Let $c_0(r)$ be the Castelnuovo's number in (\ref{eq2.3}) and  
$g\in \M(q^2)$. 
It is known that $g=\lfloor c_0(3)\rfloor$ if and only if $\cX$ is 
the quotient 
of the Hermitian curve $\cH$ by certain involution 
\cite{FGT}, \cite{AT}, \cite{KT2}. Indeed, $\cX$ is uniquely determined by plane models of type: $y^{(q+1)/2}=x^q+x$ if $q$ 
is odd, and $y^{q+1}=x^{q/2}+\ldots+x$ otherwise. 

Let us consider next an improvement on (\ref{eq2.4}). 
If $r\geq 4$, from (\ref{eq2.3}), $g\leq c_0(4)=\\
(q-1)(q-2)/6$. 
Let $r=3$ and suppose that
  $$
c_1(3)=c_1(q^2,3):=(q^2-q+4)/6 <g \leq c_0(3).
  $$
Here Halphen's theorem implies that $\cX$ is contained in a 
quadric surface and so $g=c_0(3)$ (see \cite{KT2}). 
In particular, (\ref{eq2.4}) improves to
   \begin{equation}\label{eq3.1}  
g\leq c_1(3)\, ,\quad\text{or}\quad g=\lfloor 
c_0(3)\rfloor\, ,\quad\text{or}\quad
g=c_0(2)\, .
   \end{equation}
  Next we complement Corollary \ref{cor2.1} under certain 
  extra conditions.
    \begin{proposition}\label{prop3.1} Let $\cX$ be a maximal 
    curve over $\K$, $q\not\equiv 0\pmod{3}$, of genus 
  $g$ with Frobenius dimension $r=3$ such that 
$(4q-1)(2g-2)>(q+1)(q^2-5q-2).$ Then 
  $$ 
g\geq c_0(4)+(q+1)/6= (q^2-2q+3)/6\, . 
   $$  
  \end{proposition}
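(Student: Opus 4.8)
The plan is to read the hypothesis $(4q-1)(2g-2)>(q+1)(q^2-5q-2)$ as a Castelnuovo--Halphen statement and then study the geometry of $\cX$ inside $\bP^3$. Since $r=3$, the Frobenius series $\cD$ embeds $\cX$ as a nondegenerate curve of degree $q+1$ in $\bP^3$, with generic order sequence $(0,1,\epsilon_2,q)$ and, by the theory recalled in Section \ref{s2}, $\K$-Frobenius orders $(0,\epsilon_2,q)$. First I would verify that the displayed inequality is exactly the threshold beyond which Halphen's theorem forces a nondegenerate degree-$(q+1)$ curve to lie on an irreducible surface $S$ of degree at most $3$; this is the analogue, one step down, of the quadric dichotomy used to obtain (\ref{eq3.1}). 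If $\deg S\leq 2$, then Lemma \ref{lemma2.1} and (\ref{eq3.1}) give $g=\lfloor c_0(3)\rfloor$ (or $\cX$ Hermitian), which already exceeds the claimed bound; so the substantive case is that $S$ is an irreducible cubic surface and $\cX$ lies on no quadric.

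On the cubic surface $S$ I would run adjunction. Writing $H=-K_S$ for the hyperplane class (so $H^2=3$) and $C=[\cX]\in\mathrm{Pic}(S)$, one has $C\cdot H=q+1$ and $2g-2=C^2-(q+1)$. Decomposing $C=\tfrac{q+1}{3}H+D$ with $D\cdot H=0$, the Hodge index theorem gives $D^2\leq 0$ and the clean identity $g=c_1(3)+\tfrac{1}{2}D^2$; in particular one recovers the upper bound $g\leq c_1(3)$ of (\ref{eq3.1}), and the desired lower bound $g\geq (q^2-2q+3)/6=c_0(4)+(q+1)/6$ becomes exactly the estimate $-D^2\leq (q+1)/3$ on the orthogonal defect. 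The hypothesis $q\not\equiv 0\pmod 3$ is what makes this decomposition behave well: it keeps the cubic classical and controls the integrality of $\tfrac{q+1}{3}H$, so that the extremal balanced class is the relevant one.

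The crux is therefore to bound the defect $-D^2$ from above. The class $C$ is $\Phi$-invariant in $\mathrm{Pic}(S)$ (as $\cX$ is defined over $\K$), hence so is $D$, which lies in the negative-definite lattice $H^{\perp}$. St\"ohr--Voloch applied to $\cD$ with the orders above gives $(2g-2)(2q-\epsilon_2)\leq q(q-3)(q+1)$, which controls $g$---and hence $D^2$---from above; the matching lower bound on $g$ cannot come from St\"ohr--Voloch and must instead be extracted from the geometry. The idea is that the large number $N=q^2+1+2gq$ of $\K$-rational points, together with the Frobenius relation (\ref{eq2.2}), forces the $\Phi$-invariant class $C$ to sit close to the balanced class $\tfrac{q+1}{3}H$: an unbalanced $C$ (large $-D^2$) would distribute the rational points incompatibly with the intersection numbers of $C$ against the lines of $S$ and with the Frobenius eigenstructure of $H^{\perp}$. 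Turning this heuristic into the precise estimate $-D^2\leq (q+1)/3$ is the main obstacle; it is exactly here that the point count and the hypothesis $q\not\equiv 0\pmod 3$ must be used in full, and I expect the delicate step to be ruling out a maximal curve on a cubic with genus in the gap $(c_0(4),\,c_0(4)+(q+1)/6)$.
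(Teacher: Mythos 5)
Your proposal has a genuine gap at its core: the entire argument reduces the proposition to the estimate $-D^2\leq (q+1)/3$, which you never prove and yourself describe as ``the main obstacle.'' What you have is a (conditional) reduction, not a proof. Moreover, the reduction itself rests on unverified claims. (a) You assert that the hypothesis $(4q-1)(2g-2)>(q+1)(q^2-5q-2)$ is ``exactly'' the Halphen threshold forcing $\cX$ onto an irreducible cubic surface; this would need the precise Halphen-type bound for degree-$(q+1)$ curves lying on no cubic, with exact constants, and you neither state nor check it. (b) The adjunction identity $2g-2=C^2-(q+1)$ with $K_S=-H$, and the Hodge-index decomposition, presuppose a smooth cubic $S$; an irreducible cubic surface can be singular (even a cone), where $\mathrm{Pic}(S)$ and adjunction behave quite differently. (c) The role you assign to $q\not\equiv 0\pmod 3$ (integrality of $\tfrac{q+1}{3}H$) is off: $3\mid q+1$ exactly when $q\equiv 2\pmod 3$, so for $q\equiv 1 \pmod 3$ your balanced class is not integral either -- harmless over $\mathbb{Q}$, but a sign that this congruence is not where the hypothesis actually enters. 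Finally, your claimed Frobenius orders $(0,\epsilon_2,q)$ contradict the standard fact, used in the paper, that $\nu_1=1$ for the Frobenius linear series of a maximal curve with $r=3$.

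More importantly, your guiding premise that ``the matching lower bound on $g$ cannot come from St\"ohr--Voloch'' is exactly backwards: that is precisely how the paper proves the proposition, in two short steps and with no surface geometry at all. Step 1: with $\nu_1=1$, $\nu_2=q$, and $v_P(S)\geq j_2(P)+(j_3(P)-\nu_2)=j_2(P)+1\geq \epsilon_2+1$ at each of the $N=(q+1)^2+q(2g-2)$ rational points, suppose $\epsilon_2\geq 3$; the $p$-adic criterion -- and here is where $q\not\equiv 0\pmod 3$ is really used, since $p\neq 3$ and $\binom{3}{2}=3\not\equiv 0\pmod p$ would force $2$ to be an order -- gives $\epsilon_2\geq 4$, hence $\deg(S)=(1+q)(2g-2)+(q^2+3)(q+1)\geq 5N$, which rearranges to $(q+1)(q^2-5q-2)\geq (4q-1)(2g-2)$, contradicting the hypothesis. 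So $\epsilon_2=2$. Step 2: the ramification divisor then has $\deg(R)=(q+3)(2g-2)+4(q+1)$, and $\deg(R)\geq N$ (as $v_P(R)\geq 1$ at rational points) yields $3(2g-2)\geq (q+1)(q-3)$, i.e.\ $g\geq (q^2-2q+3)/6$. The mechanism you dismissed produces a \emph{lower} bound because the hypothesis is used not to bound $g$ directly but to pin down $\epsilon_2$; once $\epsilon_2=2$, the coefficient of $2g-2$ in $\deg(R)$ exceeds the one coming from the point count by $\epsilon_2+1=3>0$, and the inequality tips in the desired direction.
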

  \begin{proof} We shall apply St\"ohr-Voloch theory \cite{SV} to 
  $\cD=|(q+1)P_0|$. 
Let $R=\sum_P v_P(R)P$ and $S=\sum_Pv_P(S)P$ denote respectively 
the ramification and Frobenius divisor of 
$\cD$. Associated to each point $P\in\cX$, there is a sequence of 
the possible intersection multiplicities of $\cX$ with hyperplanes 
in $\bP^3$, namely $\cR(P): 
0=j_0(P)<1=j_1(P)<j_2(P)<j_3(P)$. From (\ref{eq2.2}), 
$j_3(P)=q+1$ 
(resp. $j_3(P)=q$) if $P\in\cX(\Fq2)$ (resp. $P\not\in\cX(\Fq2$)). 
Moreover, 
the sequence $\cR(P)$ is the same for all but a finitely number of points (the 
so-called $\cD$-Weierstrass points of $\cX$); such a sequence (
the {\em orders} of $\cD$) will be denoted by 
$\cE: 0=\epsilon_0<1=\epsilon_1<\epsilon_2<\epsilon_3=q$. One can show that the 
numbers $1=\nu_1<q=\nu_2$ (the {\em 
$\K$-Frobenius orders} of $\cD$) satisfy the 
very basic properties (5) and (6) below (cf. \cite{SV}):
  \begin{enumerate}
\item[\rm(1)] \quad $j_i(P)\geq \epsilon_i$ for any $i$ and $P\in\cX$;
\item[\rm(2)] \quad $v_P(R)\geq 1$ for $P\in \cX(K)$;
\item[\rm(3)] \quad $\deg(R)=(\epsilon_3+\epsilon_2+1)(2g-2)+(r+1)(q+1)$;
\item[\rm(4)] \quad ($p$-adic criterion) If $\epsilon$ is an order and 
$\binom{\epsilon}{\eta}\not\equiv 0\pmod{p}$, then $\eta$ is also an order;
\item[\rm(5)] \quad $v_P(S)\geq j_2(P)+(j_3(P)-\nu_2)=j_2(P)+1$ for $P\in \cX(K)$;
\item[\rm(6)]\quad $\deg(S)=(\nu_1+\nu_2)(2g-2)+(q^2+r)(q+1)$.
   \end{enumerate}
{\bf Claim $\epsilon_2=2$.} Suppose that $\epsilon_2\geq 3$; 
then $\epsilon_2\geq 
4$ by the $p$-adic criterion. Then the maximality of $\cX$ gives
  $$
\deg(S)=(1+q)(2g-2)+(q^2+3)(q+1)\geq 5(q+1)^2+5q(2g-2)
  $$
so that 
  $$
(q+1)(q^2-5q-2)\geq (4q-1)(2g-2)\, ,
  $$
a contradiction and the proof of the claim follows. 

Finally, we use the ramification divisor $R$ of $\cD$; we have
  $$
\deg(R)=(q+2+1)(2g-2)+4(q+1)\geq (q+1)^2+q(2g-2)
  $$
and thus $g\geq (q^2-2q+3)/6$.
  \end{proof}  
  \begin{corollary}\label{cor3.1} Let $\cX$ be a maximal curve 
  over $\K,$ of genus $g,$ where $q\not\equiv 0\pmod{3}.$ Then
   $$
\text{$g\geq (q^2-2q+3)/6$\quad provided that 
$g>(q-1)(q-2)/6\, .$}
  $$
   \end{corollary}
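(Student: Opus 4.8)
The plan is to reduce everything to Proposition \ref{prop3.1}, whose two hypotheses---that the Frobenius dimension equals $3$ and that $(4q-1)(2g-2)>(q+1)(q^2-5q-2)$---I intend to extract from the single assumption $g>(q-1)(q-2)/6=c_0(4)$ together with the structural results of Section \ref{s2}. The hypothesis $q\not\equiv 0\pmod 3$ is inherited directly and is exactly what Proposition \ref{prop3.1} requires.

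First I would dispose of the large-genus case. By the refined Ihara bound (\ref{eq2.4}), either $g\le c_0(3)=(q-1)^2/4$ or $g=c_0(2)=q(q-1)/2$. In the latter case a direct comparison gives $3q(q-1)-(q^2-2q+3)=(2q-3)(q+1)>0$ for $q\ge 7$, so $g=q(q-1)/2\ge (q^2-2q+3)/6$ and the conclusion holds outright. Hence I may assume $g\le c_0(3)$, and combining this with the hypothesis $g>c_0(4)$ I get $c_0(4)<g\le c_0(3)$, so Corollary \ref{cor2.1} forces $r=3$, the first hypothesis of Proposition \ref{prop3.1}.

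The heart of the argument is then to verify the numerical hypothesis of Proposition \ref{prop3.1} from $g>c_0(4)$. Since $g>(q-1)(q-2)/6$, one has $2g-2>(q-1)(q-2)/3-2=(q-4)(q+1)/3$; multiplying by $4q-1>0$ and comparing with the target $(q+1)(q^2-5q-2)$ reduces matters to the inequality $(4q-1)(q-4)\ge 3(q^2-5q-2)$, that is, to $q^2-2q+10\ge 0$, which holds for every $q$. This yields $(4q-1)(2g-2)>(q+1)(q^2-5q-2)$, so Proposition \ref{prop3.1} applies and gives $g\ge (q^2-2q+3)/6$, as desired.

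I do not expect a genuine obstacle here: the statement is a repackaging of Proposition \ref{prop3.1} in which the clean bound $g>c_0(4)$ simultaneously pins down $r=3$ (via Corollary \ref{cor2.1}) and implies the technical inequality (via the elementary estimate above). The only point requiring real care is the exceptional value $g=c_0(2)$, where Corollary \ref{cor2.1} does not apply and $r=2$; this forces the separate direct comparison carried out in the first step, and it is the reason the case split cannot be avoided.
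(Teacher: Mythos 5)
Your proposal is correct and follows essentially the same route as the paper's proof: reduce to Frobenius dimension $r=3$ (you via Corollary \ref{cor2.1} after disposing of $g=c_0(2)$ explicitly through (\ref{eq2.4}), the paper via (\ref{eq2.3}) and Lemma \ref{lemma2.1}, leaving the $r=2$ case implicit), then verify the hypothesis of Proposition \ref{prop3.1} using the identical equivalence $2g-2>(q+1)(q-4)/3$ and the identical polynomial comparison reducing to $q^2-2q+10\geq 0$. The only difference is expository: you spell out the trivial case $g=q(q-1)/2$ that the paper absorbs into ``we can assume $r=3$.''
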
  
   \begin{proof} Let $\cD$ be the Frobenius linar series of $\cX$ 
   and $r$ the Frobenius dimension. By (\ref{eq2.3}) and Lemma 
   \ref{lemma2.1}, we can assume $r=3$. Now the 
   hypothesis on $g$ is 
   equivalent to \\
   $(2g-2)>(q+1)(q-4)/3$; thus 
  $$
(4q-1)(2g-2)>(4q-1)(q+1)(q-4)/3>(q+1)(q^2-5q-2)\, ,
  $$
and the result follows from Proposition \ref{prop3.1}.   
   \end{proof}
   
   \section{$\M(q^2)$ for $7\leq q\leq 16$}\label{s4}
   
In this section we shall improve on the following 
computations which follow from \cite[Remark 6.1]{GSX} and (\ref{eq3.1}).
  \begin{proposition}\label{prop4.1}
  \begin{enumerate}
  \item[\rm(1)] $\{0,1,2,3,5,7,9,21\}\subseteq 
  \M(7^2)\subseteq [0,7]\cup\{9\}\cup\{21\};$
   \item[\rm(2)] $\{0,1,2,3,4, 6,7,9,10, 12,28\}\subseteq 
   \M(8^2)\subseteq [0,10]\cup\{12\}\cup\{28\};$
\item[\rm(3)] $\{0,1,2,3,4,6,8,9,12,16,36\}\subseteq 
\M(9^2)\subseteq [0,12]\cup\{16\}\cup\{36\};$
\item[\rm(4)] $\{0,1,2,3,4,5,7,9,10,11,13,15,18,19,25,55\} 
\subseteq \M({11}^2)
\subseteq [0,19]\cup\{25\}\cup\{55\};$
\item[\rm(5)] $\{0,2,3,6,9,12,15,18,26,36,78\}
\subseteq \M({13}^2)\subseteq [0,26]\cup\{36\}\cup\{78\};$
\item[\rm(6)] $\{0,1,2,4,6,8,12,24,28,40,56,120\}\subseteq 
\M({16}^2)\subseteq [0,40]\cup\{56\}\cup\{120\}.$
   \end{enumerate}
   \end{proposition}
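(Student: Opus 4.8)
The plan is to establish each of the six statements as a combination of lower and upper bounds on $\M(q^2)$. The upper bounds are the easy part: they follow directly from the trichotomy in~(\ref{eq3.1}). For each $q$, I would compute the three relevant thresholds $c_1(3)=(q^2-q+4)/6$, $\lfloor c_0(3)\rfloor=\lfloor(q-1)^2/4\rfloor$, and $c_0(2)=q(q-1)/2$. Since~(\ref{eq3.1}) asserts that any $g\in\M(q^2)$ satisfies $g\leq c_1(3)$, or $g=\lfloor c_0(3)\rfloor$, or $g=c_0(2)$, the containment $\M(q^2)\subseteq[0,\lfloor c_1(3)\rfloor]\cup\{\lfloor c_0(3)\rfloor\}\cup\{c_0(2)\}$ is immediate. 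One checks, for instance, that for $q=7$ we get $c_1(3)=46/6\approx 7.67$, so $\lfloor c_1(3)\rfloor=7$, while $\lfloor c_0(3)\rfloor=9$ and $c_0(2)=21$, matching $[0,7]\cup\{9\}\cup\{21\}$; the remaining five values of $q$ are verified by the same arithmetic.

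The substantive work is the lower bounds, i.e.\ exhibiting a maximal curve over $\Fq2$ of each listed genus. The isolated top genus $q(q-1)/2$ is realized by the Hermitian curve $\cH$ itself via Lemma~\ref{lemma2.1}, and the value $\lfloor c_0(3)\rfloor=\lfloor(q-1)^2/4\rfloor$ is realized by the explicit quotient of $\cH$ described after~(\ref{eq1.3}) (namely $y^{(q+1)/2}=x^q+x$ for odd $q$, and the companion model for even $q$). For the many small genera in each list, the principal tool is Serre's observation, Remark~\ref{rem2.2}: every curve $\K$-covered by a maximal curve is itself maximal. Thus I would systematically pass to quotients $\cH/G$ of the Hermitian curve by subgroups $G$ of its automorphism group, computing the genus of each quotient via the Hurwitz genus formula (or the Riemann-Hurwitz count on the known ramification of these Kummer/Artin-Schreier-type covers). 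In practice this means enumerating the genera arising as fixed subfields of $\K(\cH)$, as carried out in~\cite[Sect.~6]{GSX}, and supplementing them with genera obtained from other known families of maximal curves not covered by $\cH$ (cf.\ Remark~\ref{rem2.3}).

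A second source of lower-bound genera, and the one that makes these lists an \emph{improvement} over the raw Garcia-Stichtenoth-Xing data, is Corollary~\ref{cor3.1}: when $q\not\equiv 0\pmod 3$ it forbids every genus in the open interval $\bigl((q-1)(q-2)/6,\,(q^2-2q+3)/6\bigr)$. This gap is what creates the jumps one sees inside the small-genus ranges (for example the absence of certain intermediate values before the isolated $\lfloor c_0(3)\rfloor$). So for $q=7,8,11,13,16$ I would first apply Corollary~\ref{cor3.1} to delete the forbidden window, then confirm that every surviving value in the stated lower-bound set is actually attained by an explicit construction. For $q=9$ and $q=13$, where $q\equiv 0\pmod 3$ fails or succeeds accordingly, the corollary applies only in the appropriate cases, and one relies more heavily on explicit quotient constructions.

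The main obstacle will be the realizability half of the problem: for each individual genus in the six lower-bound sets one must produce a concrete maximal curve, and there is no uniform formula — each value requires either a specific subgroup of $\aut(\cH)$ together with a Hurwitz computation, or an appeal to a sporadic family from the literature cited in Remark~\ref{rem2.3}. Organizing this case analysis so that no listed genus is missed, and verifying that the constructions genuinely yield $\Fq2$-maximal curves (and not merely curves with the right genus), is where the bulk of the effort lies; the upper bounds and the forbidden-interval deletions are comparatively mechanical once the thresholds are tabulated.
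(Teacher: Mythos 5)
Your first two paragraphs are precisely the paper's argument, which the paper itself gives only at citation level: Proposition~\ref{prop4.1} is introduced as following ``from \cite[Remark 6.1]{GSX} and (\ref{eq3.1})''. The upper bounds are the trichotomy (\ref{eq3.1}) evaluated at each $q$ (your arithmetic for $q=7$ is correct, and the other five cases work out the same way), and the lower bounds are the genera of explicit subcovers of the Hermitian curve tabulated in \cite{GSX}, which are maximal over $\K$ by Remark~\ref{rem2.2}. In fact, for these six values of $q$ the subcover data of \cite{GSX} already realizes every listed genus, so your proposed supplement from Remark~\ref{rem2.3} is unnecessary (and could not help here anyway: those families are defined over fields of order $\ell^6$ with $\ell>2$, which excludes all of $q=7,8,9,11,13,16$).

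Your third paragraph, however, misreads the proposition, and the role you assign to Corollary~\ref{cor3.1} is wrong. That corollary plays no part in Proposition~\ref{prop4.1}; it is the tool for Proposition~\ref{prop4.2}, the \emph{subsequent} improvement, which removes $6$ from $\M(7^2)$, $8$ from $\M(8^2)$, $16$ from $\M(11^2)$, and so on. Two concrete errors follow. First, Corollary~\ref{cor3.1} is a non-existence statement, so it cannot be ``a source of lower-bound genera''; exclusion results only sharpen upper containments, and indeed the upper bounds stated in Proposition~\ref{prop4.1} still contain the values the corollary forbids (e.g.\ $6\in[0,7]$ for $q=7$), direct evidence that it was not used. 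Second, the gaps inside the lower-bound lists are not ``created'' by the forbidden window $\bigl((q-1)(q-2)/6,\,(q^2-2q+3)/6\bigr)$: for $q=7$ the values missing from $[0,7]$ are $4$ and $6$, and while $6$ does lie in the window $(5,19/3)$, the value $4$ does not --- its exclusion is Corollary~\ref{cor4.1} and requires the Kudo--Harashita theorem \cite{KH}, not St\"ohr--Voloch theory. The gaps in the lower-bound lists are simply genera not attained by the known constructions, and Proposition~\ref{prop4.1} makes no claim about them. None of this invalidates your proof, since your first two paragraphs suffice; but the third paragraph should be deleted, as executing it as written (treating the corollary as part of proving these containments) would have you proving a different, partly false, statement.
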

   \begin{proposition}\label{prop4.2} Let $\M(q^2)$ be 
  the spectrum for the genera of maximal curves over $\K.$ Then
   \begin{enumerate}
   \item[\rm(1)] $6\not\in\M(7^2);$
   \item[\rm(2)] $8 \not\in\M(8^2);$
   \item[\rm(3)] $16\not\in\M({11}^2);$
   \item[\rm(4)] $23,24 \not\in\M({13}^2);$
   \item[\rm(5)] $36,37\not\in\M({16}^2).$
   \end{enumerate}   
   \end{proposition}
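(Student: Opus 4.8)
The plan is to obtain all five nonexistence statements as immediate consequences of Corollary \ref{cor3.1}. First I would verify that each of the five moduli $q\in\{7,8,11,13,16\}$ meets the hypothesis $q\not\equiv 0\pmod 3$: indeed $7\equiv 13\equiv 16\equiv 1$ and $8\equiv 11\equiv 2\pmod 3$. (This is also exactly what forces $q=9$ to be absent from the statement, since there the corollary does not apply.) With the hypothesis in force, Corollary \ref{cor3.1} asserts that the genus $g$ of any maximal curve over $\K$ obeys either $g\leq c_0(4)=(q-1)(q-2)/6$ or $g\geq (q^2-2q+3)/6$. Equivalently, no integer $g$ lying strictly inside the open interval
  $$
  \bigl(\,(q-1)(q-2)/6,\ (q^2-2q+3)/6\,\bigr)
  $$
can be realized. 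Since $(q^2-2q+3)/6=c_0(4)+(q+1)/6$, this forbidden gap has length $(q+1)/6$, and the whole proof reduces to listing the integers it contains for each $q$.

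The remaining step is the routine evaluation of the two endpoints in each case and reading off the integers strictly between them. For $q=7$ the gap is $(5,\,38/6)$, excluding only $g=6$; for $q=8$ it is $(7,\,8.5)$, excluding $g=8$; for $q=11$ it is $(15,\,17)$, excluding $g=16$; for $q=13$ it is $(22,\,146/6)$, excluding $g=23$ and $g=24$; and for $q=16$ it is $(35,\,227/6)$, excluding $g=36$ and $g=37$. These are precisely the values listed in items (1)--(5), so each statement follows at once.

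I expect no genuine obstacle here, since the substantive work already sits inside Proposition \ref{prop3.1} and Corollary \ref{cor3.1}, where the St\"ohr--Voloch analysis of the Frobenius series $\cD$ yields the claim $\epsilon_2=2$ and then the lower bound on $g$. The only point demanding care is the strictness of the inequalities at the ends of the gap: the lower endpoint $c_0(4)$ is itself an admissible genus, and whenever $(q^2-2q+3)/6$ happens to be an integer (as for $q=11$, where it equals $17$) that value is admissible too, so neither endpoint should be miscounted among the excluded genera. A convenient consistency check is that every excluded $g$ lies at or below $\lfloor c_1(3)\rfloor$, hence inside the admissible window recorded in Proposition \ref{prop4.1}, confirming that nothing outside the Corollary \ref{cor3.1} gap has been removed.
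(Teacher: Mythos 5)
Your proposal is correct and follows exactly the paper's own route: the paper likewise dispatches all five cases by applying Corollary \ref{cor3.1} (stating the $q=7$ case and noting the others are handled similarly), and your arithmetic for the forbidden gap $\bigl((q-1)(q-2)/6,\ (q^2-2q+3)/6\bigr)$ in each case is accurate, including the correct treatment of the integer endpoint $17$ when $q=11$. Your explicit verification of $q\not\equiv 0\pmod 3$ and the remark explaining the absence of $q=9$ are sound additions but do not change the argument.
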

   \begin{proof} Let $q=7$. By Corollary \ref{cor3.1}, 
   $g=6\not\in \M(7^2)$. The other cases are handle in a 
   similar way.
   \end{proof}
   \begin{corollary}\label{cor4.1} We have
   $$
   \M(7^2)=\{0,1,2,3,5,7,9,21\}\, .
   $$
  \end{corollary}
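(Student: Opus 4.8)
The plan is to combine the inclusion from Proposition \ref{prop4.1}(1) with the exclusion from Proposition \ref{prop4.2}(1) and one additional input, namely the result of Kudo and Harashita \cite{KH} that there is no maximal curve of genus $4$ over $\mathbb{F}_{49}$. Proposition \ref{prop4.1}(1) already gives the two-sided bound
\[
\{0,1,2,3,5,7,9,21\}\subseteq \M(7^2)\subseteq [0,7]\cup\{9\}\cup\{21\}\, ,
\]
so the only genera whose membership in $\M(7^2)$ remains undecided are those lying in the gap between the stated lower and upper sets, namely $g=4$ and $g=6$. Thus the entire problem reduces to showing that neither $4$ nor $6$ belongs to $\M(7^2)$.

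First I would dispose of $g=6$: this is exactly Proposition \ref{prop4.2}(1), which applies Corollary \ref{cor3.1} with $q=7$. Indeed, $6>(q-1)(q-2)/6=5$, so Corollary \ref{cor3.1} forces $g\geq (q^2-2q+3)/6=38/6>6$, a contradiction; hence $6\not\in\M(7^2)$. Second I would dispose of $g=4$ by invoking \cite{KH} directly, since that reference asserts precisely the nonexistence of a maximal curve of genus $4$ over $\mathbb{F}_{7^2}$, so $4\not\in\M(7^2)$.

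With both of these removed from the upper bound, the remaining candidate set becomes
\[
\bigl([0,7]\cup\{9\}\cup\{21\}\bigr)\setminus\{4,6\}=\{0,1,2,3,5,7,9,21\}\, ,
\]
which coincides exactly with the lower bound already established in Proposition \ref{prop4.1}(1). Therefore the inclusions become equalities and $\M(7^2)=\{0,1,2,3,5,7,9,21\}$, as claimed.

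I do not expect any serious obstacle here, since the corollary is essentially a bookkeeping step that pins the two-sided bound together. The genuine mathematical content lies elsewhere: the exclusion of $g=6$ rests on the St\"ohr--Voloch argument behind Proposition \ref{prop3.1} and its Corollary \ref{cor3.1}, while the exclusion of $g=4$ relies entirely on the external computation in \cite{KH}, which is the one input not provable by the elementary Castelnuovo/Halphen/St\"ohr--Voloch machinery developed in this paper. The only point requiring care is to confirm that no candidate genus other than $4$ and $6$ sits in the gap between the two sets of Proposition \ref{prop4.1}(1); a direct comparison of the listed values shows this is indeed the case.
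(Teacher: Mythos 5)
Your proposal is correct and follows essentially the same route as the paper: the two-sided bound of Proposition \ref{prop4.1}(1), the exclusion of $g=6$ via Proposition \ref{prop4.2}(1) (i.e., Corollary \ref{cor3.1}), and the exclusion of $g=4$ by the Kudo--Harashita result \cite{KH}. The paper's own proof is exactly this bookkeeping step, so nothing further is needed.
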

  \begin{proof} By the above Propositions, it is enough to show 
  that $4\not\in \M(7^2)$. Indeed, this is the case as 
  follows from a result in Kudo and Harashita paper 
  \cite[Thm. B]{KH} concerning superspecial curves.
  \end{proof}
 \begin{remark}\label{rem4.1} To compute $\M(q^2)$ for $q=8,9,11,13,16$ we need to answer the following questions: 
 \begin{enumerate}
 \item[\rm(1)] Is $5\in\M(8^2)$?
 \item[\rm(2)] Are $5, 7,10,11\in\M(9^2)$? 
 \item[\rm(3)] Are $8,12,14,17\in \M({11}^2)$?
 \item[\rm(4)] Are $1,4,5,7,8,10,11,13,14, 16,17,19,20,21,22\in
 \M({13}^2)$?  
 \item[\rm(5)] Are $3,5,7,9,10, 11, 13,14,\ldots, 22,23,25,26, 27, 
 29,30,31,32,33,34, 
 35, 38, 39\in \M({16}^2)$?
   \end{enumerate}
   \end{remark}
      \begin{example}\label{ex4.1} Here, for the sake of 
completeness, we provide an example of a maximal curve 
of genus $g$ for each $g\in \M(7^2)$; cf. \cite{many}, \cite{TT}.
  \begin{enumerate}
  \item[\rm(1)] ($g=0$) The rational curve;
  \item[\rm(2)] ($g=1$) $y^2=x^3+x$;
  \item[\rm(3)] ($g=2$) $y^2=x^5+x$;
  \item[\rm(4)] ($g=3$) $y^2=x^7+x$;
  \item[\rm(5)] ($g=5$) $y^8=x^4-x^2$;
  \item[\rm(6)] ($g=7$) $y^{16}=x^9-x^{10}$;
  \item[\rm(7)] ($g=9$) $y^4=x^7+x$;
  \item[\rm(8)] ($g=21$) $y^8=x^7+x$.
   \end{enumerate}
   \end{example}
   \begin{remark}\label{rem4.2} The curves in (6), (7), and (8) 
   above are unique up to $\mathbb F_{49}$-isomorphism; see respectively 
   \cite{FGP}, \cite{FGT}, and \cite{R-Sti}.
    \end{remark}
     
   {\bf Acknowledgment.} The first author was partially supported by 
   FAPESP, grant 2013/00564-1. The second author was in part supported 
   by a grant from IPM (No. 93140117). The third author was partially 
   supported by CNPq (308326/2014-8).

   \end{document}